 \renewcommand\sout{\bgroup\markoverwith
 {\textcolor{red}{\rule[0.7ex]{3pt}{1.4pt}}}\ULon}
 \newcommand\magenta[1]{\textcolor{magenta}{#1}}               %
\newcommand{\clop}{\overline{\psi^{0}}(M; E)}
\newcommand{\clopn}{\overline{\psi^{-1}}(M; E)}
\newcommand{\Hom}{\operatorname{Hom}}
\newcommand{\End}{\operatorname{End}}
\newcommand{\CI}{\mathcal{C}^{\infty}}
\newcommand{\CIc}{\mathcal{C}_c^{\infty}}
\newcommand{\Ind}{\operatorname{Ind}}
\newcommand{\Prim}{\operatorname{Prim}}
\newcommand{\oid}{\operatorname{Id}}
\newcommand\bijchi{\magenta{\chi}}
\newcommand\one{\mathbf{1}}
\newcommand{\CC}{\mathbb C}
\newcommand{\RR}{\mathbb R}
\newcommand{\maC}{\mathcal C}
\newcommand{\maR}{\mathcal R}
\newcommand\ede{\, := \,}
\newcommand\seq{\, = \,}
\newtheorem{theorem}{Theorem}[section]
\newtheorem{lemma}[theorem]{Lemma}
\newtheorem{proposition}[theorem]{Proposition}
\newtheorem{problem}{Problem}
\theoremstyle{definition}
\newtheorem{definition}[theorem]{Definition}
\begin{document}

\title[Fredholm conditions]{Fredholm conditions for operators
invariant with respect to compact Lie group actions}
  
\author[A. Baldare]{Alexandre Baldare}
\email{alexandre.baldare@math.uni-hannover.de}
\address{Institut fur Analysis, Welfengarten 1, 30167 Hannover,
Germany}

\author[R. C\^ome]{R\'emi C\^ome} \email{remi.come@univ-lorraine.fr}
\address{Universit\'{e} Lorraine, 57000 Metz, France}

\author[V. Nistor]{Victor Nistor} \email{nistor@univ-lorraine.fr}
\address{Universit\'{e} Lorraine, 57000 Metz, France}
  \urladdr{http://www.iecl.univ-lorraine.fr/~Victor.Nistor}

\thanks{A.B., R.C., and V.N. have been partially supported by
  ANR-14-CE25-0012-01 (SINGSTAR). Manuscripts available from {\bf
    http:{\scriptsize//}www.math.psu.edu{\scriptsize/}nistor{\scriptsize/}.
} }

%%%%%%%%%%%%%%%%%%%

\begin{abstract}
Let $G$ be a compact Lie group acting smoothly on a smooth, compact
manifold $M$, let $P \in \psi^m(M; E_0, E_1)$ be a $G$--invariant,
classical pseudodifferential operator acting between sections of two
vector bundles $E_i \to M$, $i = 0,1$, and let $\alpha$ be an
irreducible representation of the group $G$. Then $P$ induces a map
$\pi_\alpha(P) : H^s(M; E_0)_\alpha \to H^{s-m}(M; E_1)_\alpha$
between the $\alpha$-isotypical components. We prove that the map $\pi_\alpha(P)$ is
Fredholm if, and only if, $P$ is {\em transversally $\alpha$-elliptic}, a condition
defined in terms of the principal symbol of $P$ and the action of
$G$ on the vector bundles $E_i$. 
\end{abstract}

\maketitle

\section{Introduction}

Throughout this paper, let $G$ be a compact Lie group acting smoothly 
and isometrically on a smooth, compact riemannian 
manifold $M$, let $P \in \psi^m(M; E_0, E_1)$ be a $G$--invariant,
classical pseudodifferential operator acting between sections of two
vector bundles $E_i \to M$, $i = 0,1$, and let $\alpha$ be an
irreducible representation of the group $G$. Then $P$ induces, by $G$-invariance, a map
\begin{equation}\label{eq.def.Pchi}
   \pi_\alpha(P) : H^s(M; E_0)_\alpha \to H^{s-m}(M; E_1)_\alpha
\end{equation}
between the $\alpha$-isotypical components of the corresponding
Sobolev spaces of sections. In this short note, we obtain necessary and sufficient
conditions for $\pi_\alpha(P)$ to be Fredholm. Fredholm operators are important in 
many applications to PDEs and geometry. Our result generalizes to the case of 
compact Lie groups the results of \cite{BCLN1,BCLN2},
which dealt with finite groups. We assume the reader is familiar with \cite{BCLN2}
and here we just explain the main differences from the case of finite groups. 

In order to state our result, we need to set up some notation, some already introduced 
in \cite{BCLN1, BCLN2}. As usual, $\widehat{G}$ denotes the set of equivalence classes 
of irreducible $G$-modules (or representations, 
agreeing that all our representations are strongly continuous).  
In general, if  $T : V_0 \to V_1$ is a $G$-equivariant linear map of $G$-modules and 
$\alpha \in \widehat G$, we let $\pi_\alpha(T): V_{0\alpha} \to V_{1\alpha}$ denote 
the induced $G$-linear map between  the $\alpha$-isotypical components of the $G$-modules
$V_i$, $i = 0, 1$. Since $P$ is $G$-invariant, its principal symbol $\sigma_m(P)$  belongs
 to $\in \maC^{\infty}(T^*M \smallsetminus \{0\}; \Hom(E_0, E_1))^G$. 
Let $G_\xi$  and $G_x$ denote the isotropy subgroups of $\xi \in
T_x^*M$ and $x \in M$, as usual. Then $G_\xi \subset G_x$ acts linearly
on the fibers $E_{0x}$ and on $E_{1x}$. Let $\mathfrak{g}$ denote the Lie algebra of $G$. 
Then any $Y \in \mathfrak{g}$ defines
a canonical vector field $Y_M$ on $M$. Let then as
in \cite{atiyahGelliptic} the {\em $G$-transverse cotangent space} be given by
\begin{equation}\label{eq.transverse.ts}
    T^*_G M \ede \{\, \xi \in T^*M\ | \ \xi(Y_M(\pi(\xi))) =
    0,\ \forall Y \in \mathfrak{g} \, \}.
\end{equation}
As before, $S^*M$ denotes the unit cosphere bundle of $M$.
Let $S^*_GM := S^*M \cap
  T^*_GM $ denote the set of unit covectors in the $G$-transverse cotangent
  space $T^*_GM$. Let then
\begin{equation}\label{eq.def.Gamma.symb} 
  \begin{gathered}
    \sigma_m^G(P)\ :\ \Omega_{M}(E) \ede  \{ ( \xi, \rho) \in S_G^*M \times \widehat G_\xi 
    \, \vert \ E_{\xi \rho} \neq 0 \}\ 
  \rightarrow \bigcup_{(x, \rho) \in \Omega_{M}(E)} \Hom(E_{0x
      \rho}, E_{1x \rho})^{G_\xi}\,, \\
    \sigma_m^G (P) (\xi, \rho) \ede \pi_{\rho}(\sigma_m(P)(\xi)) \in
    \Hom(E_{0x \rho}, E_{1x \rho})^{G_\xi}\,, \ \ \xi \in  (S_G^*M)_x  \,,
   \end{gathered}
\end{equation}
called the {\em $G$-principal symbol} $\sigma_m^G (P)$ of $P$. 

Recall also the following. If $A$ and $B$ are compact groups, $H$ is a subgroup of both $A$
and $B$, and $\mathrm{Hom}_H(\alpha, \beta)\neq0$, then $\alpha \in \hat{A}$ and $\beta \in \hat{B}$ 
are said {\em $H$-associated.} The group $G$ acts on $\{G_\xi \mid \xi \in T^*M\}$ by 
$g\cdot G_\xi :=G_{g\xi}=gG_\xi g^{-1}$.
For $\rho \in \widehat{G}_\xi$ define $g \cdot \rho \in
\widehat{G}_{g\xi}$ by $(g\cdot \rho)(h)=\rho(g^{-1}hg)$, for all
$h\in G_{g\xi }$. The characterization of Fredholm operators can be reduced to each
component of the orbit space $M/G$, and therefore we can and will assume $M/G$ to be \underline{connected}.
Under this hypothesis there exists a minimal isotropy subgroup $K$
such that any isotropy subgroup contains a subgroup conjugated with $K$, 
Furthermore, the set of points $M_{(K)}$ with stabilizer conjugated with
$K$ is an open dense submanifold of $M$ called the principal orbit bundle of $M$, see 
\cite{tomDieckTransBook}. Let us fix a minimal isotropy group $K \subset G$ for $M$ and
\begin{equation}\label{eq.def.Xalpha}
   \Omega^{\alpha}_{M} \ede \{(\xi, \rho) \in \Omega_{M}(E) \mid
   \text{$\exists g\in G$, $g\cdot \rho$ and $\alpha$ are
     $K$-associated} \}.
\end{equation}
 In the definition of the space $\Omega_M^\alpha$ above $g
  \in G$ is such that $K \subset g\cdot G_\xi$.

\begin{definition}\label{def.chi.ps}
The {\em $\alpha$-principal symbol} $\sigma_m^\alpha (P)$ of $P$ is
$\sigma_m^\alpha(P) \ede \sigma_m^G(P)\vert_{\Omega^\alpha_{M}}$.
We shall say that $P \in \psi^m(M; E_0, E_1)^{G}$ is {\em transversally 
  $\alpha$-elliptic} if its $\alpha$-principal symbol
$\sigma_m^\alpha(P)$ is invertible everywhere on its domain of
definition. 
\end{definition}

The  transversal $\one$-ellipticity is related with transversal ellipticity on (singular) 
foliations \cite{AnSk11a, connesBook, DLR}.
We can now formulate our main result.

\begin{theorem}\label{thm.main1}
Let $m\in \RR$, $P \in \psi^m(M; E_0, E_1)^{G}$ and $\alpha \in \widehat{G}$. Then
\begin{equation*}
    \pi_\alpha(P) : H^s(M; E_0)_\alpha \, \to \, H^{s-m}(M;
    E_1)_\alpha
\end{equation*}
is Fredholm if, and only if $P$ is transversally $\alpha$-elliptic.
\end{theorem}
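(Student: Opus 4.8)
The plan is to adapt the $C^*$-algebraic argument used for finite groups in \cite{BCLN2}; the one genuinely new point is the analysis of the directions transverse to the $G$-orbits, which are invisible when $\mathfrak{g}=0$.

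First I would reduce to the case $s=m=0$. Since the action is isometric, for a $G$-invariant connection on $E_i$ the operator $\Lambda_i\ede(1+\Delta_{E_i})^{1/2}\in\psi^1(M;E_i)^G$ is elliptic and invertible, and replacing $P$ by $\Lambda_1^{s-m}P\Lambda_0^{-s}$ changes neither the Fredholm property of $\pi_\alpha(P)$ nor, since $\sigma_m(P)$ only gets multiplied by invertible scalar symbols, the transversal $\alpha$-ellipticity. Writing $E\ede E_0\oplus E_1$, one then works inside the unital $C^*$-algebra $\mfkA\ede\overline{\psi^0(M;E)^G}\subseteq\maL(L^2(M;E))$ with its closed two-sided ideal $\mfkJ\ede\overline{\psi^{-1}(M;E)^G}$; averaging the classical non-equivariant facts over $G$ gives $\mfkJ=\mfkA\cap\maK(L^2(M;E))$ and an exact sequence $0\to\mfkJ\to\mfkA\xrightarrow{\sigma_0}\maC(S^*M;\End E)^G\to0$. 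The orthogonal projection $p_\alpha$ onto $L^2(M;E)_\alpha$ is a norm limit of averages $\int_G f(g)U_g\dd g$, hence commutes with every $G$-invariant operator, so $\pi_\alpha(A)\ede A|_{L^2(M;E)_\alpha}=p_\alpha Ap_\alpha$ is a unital $*$-homomorphism $\mfkA\to\maL(L^2(M;E)_\alpha)$ sending $\mfkJ$ into the compacts. By Atkinson's theorem, $\pi_\alpha(P)$ is Fredholm if and only if its class in the Calkin algebra of $L^2(M;E)_\alpha$ is invertible; since that class lies in the unital $C^*$-subalgebra $\mathfrak{B}_\alpha\ede\mfkA/\{A\in\mfkA:\pi_\alpha(A)\ \text{compact}\}$ (the image of $\mfkA$ in the Calkin algebra, a quotient of $\mfkA/\mfkJ\cong\maC(S^*M;\End E)^G$), spectral permanence shows this holds if and only if the class of $P$ is invertible in $\mathfrak{B}_\alpha$.

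Everything then comes down to identifying $\mathfrak{B}_\alpha$, i.e. the ideal $I_\alpha\subseteq\maC(S^*M;\End E)^G$ of principal symbols $\sigma_0(A)$ with $\pi_\alpha(A)$ compact, and this is the main obstacle. Two facts are needed. (i) If $A\in\psi^0(M;E)^G$ has principal symbol supported away from $S^*_GM$, then $\pi_\alpha(A)$ is compact: near any $\xi$ in the support there is $Y\in\mathfrak{g}$ with $\xi(Y_M)\ne0$, so $\tfrac1iY_M$ is elliptic of order $1$ there; but $G$ acts on $L^2(M;E)_\alpha$ through the finite-dimensional representation $\alpha$, so $Y_M$ restricts to a bounded operator on $L^2(M;E)_\alpha$, and writing (via a $G$-invariant microlocal parametrix of $\tfrac1iY_M$, obtained by averaging) $A=B\,\tfrac1iY_M+R$ microlocally near $\xi$ with $B\in\psi^{-1}(M;E)^G$ and $R$ of order $-1$, one gets $\pi_\alpha(A)=\pi_\alpha(B)\pi_\alpha(\tfrac1iY_M)+\pi_\alpha(R)$ with $\pi_\alpha(B),\pi_\alpha(R)$ compact and $\pi_\alpha(\tfrac1iY_M)$ bounded; a $G$-invariant partition of unity on the support finishes it. Hence $\mathfrak{B}_\alpha$ is already a quotient of $\maC(S^*_GM;\End E)^G$. (ii) Using the slice theorem and the principal orbit bundle $M_{(K)}$ one localizes near a $G$-orbit and invokes Frobenius reciprocity: the $(\xi,\rho)$-isotypical piece of $L^2(M;E)_\alpha$ vanishes unless $(\xi,\rho)\in\Omega^{\alpha}_{M}$, and when $(\xi,\rho)\in\Omega^{\alpha}_{M}$ it is infinite-dimensional — unless $M/G$ is a point, in which case $M$ is a single orbit, $S^*_GM=\emptyset$, $\Omega^{\alpha}_{M}=\emptyset$, and $\pi_\alpha(P)$ is automatically Fredholm on a finite-dimensional space, consistently with the theorem. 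Consequently, operators whose $G$-principal symbol vanishes on $\Omega^{\alpha}_{M}$ have $\pi_\alpha$-compact image, whereas if $\sigma_0^G(A)(\xi_0,\rho_0)\ne0$ for some $(\xi_0,\rho_0)\in\Omega^{\alpha}_{M}$ then a concentration argument (sections microlocalized near $\xi_0$, supported near the orbit of the base point of $\xi_0$, lying in the $\rho_0$-type and the $\alpha$-isotype, and spread over the positive-dimensional base) exhibits an infinite-dimensional subspace on which $\pi_\alpha(A)$ is, modulo compacts, multiplication by the nonzero $\sigma_0^G(A)(\xi_0,\rho_0)$, so $\pi_\alpha(A)$ is not compact. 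Thus $I_\alpha$ consists exactly of the symbols whose $G$-principal symbol vanishes on $\Omega^{\alpha}_{M}$, and $\mathfrak{B}_\alpha$ is identified with the restriction of the $G$-principal symbol to $\overline{\Omega^{\alpha}_{M}}$.

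Putting the pieces together: $\pi_\alpha(P)$ is Fredholm if and only if the class of $P$ is invertible in $\mathfrak{B}_\alpha$, if and only if $\sigma_m^G(P)(\xi,\rho)$ is invertible for every $(\xi,\rho)\in\Omega^{\alpha}_{M}$ — the uniform boundedness of the inverses being automatic once this holds, by continuity of the symbol and compactness of $\overline{\Omega^{\alpha}_{M}}$ — that is, if and only if $\sigma_m^\alpha(P)$ is invertible, i.e. $P$ is transversally $\alpha$-elliptic. The crux is part (ii) together with the bounded-action argument in (i): for finite $G$ one has $S^*_GM=S^*M$ and $\mathfrak{g}=0$, so (i) is vacuous and (ii) is the finite-group computation of \cite{BCLN2}. (Equivalently one can argue more directly: for sufficiency, glue a $G$-invariant microlocal parametrix of $P$ over the $\alpha$-relevant part of $S^*_GM$ and anything elsewhere, its error being $\pi_\alpha$-compact by (i) and (ii); for necessity, when $\sigma_m^\alpha(P)$ is not invertible at a point of $\Omega^{\alpha}_{M}$, the concentration argument of (ii) produces a Weyl sequence contradicting the Fredholm property.)
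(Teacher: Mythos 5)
Your overall framework matches the paper's: reduce to $s=m=0$, view $\pi_\alpha(P)$ inside $\maL(L^2(M;E)_\alpha)$, and use Atkinson's theorem together with spectral permanence to translate Fredholmness into invertibility of the class of $\sigma_0(P)$ in the quotient of $\maC(S^*_GM;\End E)^G$ by the ideal $I_\alpha$ of symbols whose $\pi_\alpha$-image is compact. Your fact (i) is a direct reproof of Atiyah's observation, which the paper simply cites; your ``concentration argument'' for non-compactness is in spirit the paper's Proposition~\ref{prop.injective} (oscillatory testing with the sections $e_t s_\eta$). Up to this point the proposal is sound.

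The genuine gap is in the other half of step (ii), which is the technical heart of the paper. You assert that operators whose $G$-principal symbol vanishes on $\Omega^\alpha_M$ have $\pi_\alpha$-compact image, and offer as justification only that ``one localizes near a $G$-orbit and invokes Frobenius reciprocity,'' after which the statement is presented as a consequence. Frobenius reciprocity does tell you which $G_\xi$-types of the fiber $E_\xi$ can appear in $L^2(W;E)_\alpha$ over a tube $W$, but this is a statement about representations on fibers, not about the operator: it does not, by itself, produce the assertion that a pseudodifferential operator whose $G$-principal symbol vanishes at such points acts compactly on the $\alpha$-isotype. What is actually needed is the content of Lemma~\ref{lemma.q} and Lemma~\ref{lem.proof.main.thm}(ii): for a singular $(\xi,\rho)$ with $\rho^K=0$ one must construct a specific symbol $\widetilde q$ (built from the projections $p_{\xi,K}$ and a bump function near $\xi$) with $\pi_{(\xi,\rho)}(\widetilde q)\neq0$ and $\mathbb P\,\widetilde q|_{S^*M_{(K)}}=0$, then quantize it in local charts, \emph{average over $G$} to keep the symbol, and show by an argument using the density of the principal stratum that the resulting operator $\widetilde Q$ satisfies $\widetilde Q|_{L^2(W;E)^G}=0$. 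This is a nontrivial pseudodifferential argument (see Equation~\eqref{eq.PQ=0} and the surrounding discussion), not a formal representation-theoretic consequence. Without it, the identification of $I_\alpha$ — and hence of $\mathfrak B_\alpha$ — is incomplete, and the ``if'' direction of the theorem is not established.

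A smaller but also unaddressed point: your final identification ``$\mathfrak B_\alpha$ is the restriction of the $G$-principal symbol to $\overline{\Omega^\alpha_M}$'' tacitly uses that $\Omega^\alpha_M(E)/G=\overline{\Omega^\alpha_{M_{(K)}}(E)/G}$, i.e.\ that every associated pair over a singular orbit is a limit of pairs over the principal stratum. This is Lemma~\ref{lem.proof.main.thm}(i) in the paper, and it is needed to pass between ``invertible on $\Omega^\alpha_M$'' and ``invertible on the closure of the principal-orbit part,'' which is what the Calkin-algebra argument actually delivers. You should include this density statement (which in turn relies on Lemma~\ref{lemma.comp.transverse}) to close the loop.
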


For $G$ finite, our main result was proved before
\cite{BCLN1, BCLN2}. This is the first paper that deals with the
non-discrete case. As far as the statement of the result goes, the
case non-discrete is different from the discrete case in that $S_G^* M
\neq S^*M$. The proof in the non-discrete case, is, however,
significantly different from the one in the discrete case. Our
results are motivated, in part, by questions in Index Theory and also
by the recent improvement \cite{bruning2019some} and the reference therein.
The techniques used in this paper to obtain Fredholm conditions
are related also to the ones in
\cite{SavinSchrohe}, used for $G$-opeators, and the ones in \cite{BL92},
used for complexes of operators. See also \cite{DLR, MSS, vEY2}.

We thank Matthias Lesch, Paul-Emile Paradan, and Elmar Schrohe for useful 
discussions.

\section{Background material}
%\label{sec.Preliminaries}
\label{sec2}

This section is devoted to background material and results. The reader can find
more details in \cite{BCLN1, BCLN2}. We concentrate only on the material that
is significantly different from the discrete case, for which we refer to 
\cite{BCLN2}. There is no loss of generality to assume that $M/G$ is
connected (recall that $G$ is a compact Lie group acting by isometries on a
compact Riemannian manifold $M$). A $G$-module will be a strongly continuous 
representation of $G$.

One of the main differences in the non-discrete case is that we need two versions 
of induction. Let $H \subset G$ be a closed subgroup and $V$ be an $H$-module, 
we define, as usual, the continuous
induced representation by
\begin{equation}\label{eq.def.induced}
  \begin{split}
    c_0\mbox{-}\Ind_H^G (V)  \ede \ & \maC(G, V)^H 
    %\\ 
   %= \ &
  =\ 
  \{\, f \in \maC( G, V) \, |\  f(gh^{-1}) = h f(g) \, \} .
  \end{split}
\end{equation}
Assume that $V$ is a Hilbert space and that the representation is
unitary. Then we let the {\em
  hilbertian induced representation} $L^2\mbox{-}\Ind_H^G(V)$ be
the completion of $c_0\mbox{-}\Ind^G_H(V)$ with respect to the induced norm.
Then $G$ acts by left translation on $c_0\mbox{-}\Ind_H^G (V)$ and
$L^2\mbox{-}\Ind_H^G(V)$. Our proofs
use Frobenius reciprocity for both types of induction.

Let us summarize two important properties of the $G$-transversal spaces $T_G^*M$
and $S_G^*M$.

\begin{lemma}\label{lemma.comp.transverse} 
%\label{lemma.dense}
Let $H$ be a closed subgroup of $G$ and $S$ be a $H$-manifold. Then
$T^*_G(G\times_H S) \cong G\times_H T^*_HS\,.$ Moreover, 
The subset $S^*_G M_{(K)}$ is dense in $S^*_G M$.
\end{lemma}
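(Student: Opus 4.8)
The plan is to establish the two assertions of Lemma~\ref{lemma.comp.transverse} separately, the first being an essentially formal bundle computation and the second a density statement that uses the structure of the principal orbit bundle.

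For the isomorphism $T^*_G(G \times_H S) \cong G \times_H T^*_H S$, the first step is to recall the standard identification of the tangent bundle of an associated bundle. Writing $q \colon G \times S \to G \times_H S$ for the quotient and using the $G$-action, one has the exact sequence of vector bundles over $G \times_H S$ relating $T(G\times_H S)$, the pullback of $TS$ along the projection to $S/\!\!\sim$, and the subbundle generated by the $G$-action; concretely $T_{[g,s]}(G\times_H S) \cong \big(\mathfrak g \oplus T_s S\big)/\mathfrak h$, where $\mathfrak h$ acts diagonally (by the infinitesimal $H$-action on $\mathfrak g$ via $\mathrm{ad}$ and on $T_sS$). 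Dualizing, a covector at $[g,s]$ is a functional on $\mathfrak g \oplus T_sS$ annihilating the diagonal copy of $\mathfrak h$. Now impose the $G$-transversality condition from \eqref{eq.transverse.ts}: the fundamental vector fields $Y_{G\times_H S}$ for $Y \in \mathfrak g$ at $[g,s]$ span exactly the image of $\mathfrak g$ in the quotient above, so $\xi \in T^*_G(G\times_H S)$ means $\xi$ annihilates (the image of) all of $\mathfrak g$, leaving a functional on $T_sS$ that annihilates $\mathfrak h \cdot T_sS$, i.e.\ precisely an element of $T^*_H S$ at $s$. Tracking the $G$- and $H$-equivariance of these identifications (the $H$-action being the residual one used to form $G \times_H(-)$), one gets the claimed isomorphism; I would present this as a diagram chase rather than in coordinates.

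For the density of $S^*_G M_{(K)}$ in $S^*_G M$, the strategy is local near an arbitrary covector $\xi_0 \in S^*_G M$ over a point $x_0$ with isotropy $G_{x_0} = H \supseteq K$ (up to conjugacy). By the slice theorem, a $G$-neighborhood of the orbit of $x_0$ is $G$-equivariantly diffeomorphic to $G \times_H S$ for a linear slice $S$ (an $H$-representation), and by the first part of the lemma this identifies $S^*_G M$ near $\xi_0$ with $S^*_H S$ near the corresponding covector. So it suffices to prove the analogous density statement for the linear $H$-action on the slice $S$: that $S^*_H S_{(K')}$ is dense in $S^*_H S$, where $K'$ is a minimal isotropy group for the $H$-action on $S$ — and one checks $K'$ is conjugate (in $G$) to $K$ since the principal orbit type is preserved under passing to slices. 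On the slice, $S_{(K')}$ is open and dense in $S$ (standard for the principal orbit bundle, \cite{tomDieckTransBook}); the remaining point is fiberwise: over each $s$ in the non-principal stratum one must approximate a transverse unit covector $\eta \in (S^*_H S)_s$ by transverse unit covectors over nearby principal points. Here the transversality constraint $\eta(Y_S) = 0$ for $Y \in \mathfrak h$ is an \emph{open} (indeed linear, closed) condition that varies continuously, and since $S_{(K')}$ is dense one can choose $s_n \to s$ principal and lift $\eta$ to transverse covectors $\eta_n$ over $s_n$ by a continuity/transversality argument (e.g.\ using a smooth section of $T^*_H S \to S$ through $\eta$, which exists because $T^*_H S$ is a genuine smooth subbundle away from nothing — it is a subbundle over all of $S$ since the rank of $Y \mapsto Y_S(s)$ jumps only downward on smaller strata, so one must instead work on the open dense set where the rank is locally constant, namely a neighborhood of the principal stratum, and pass to the limit).

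The main obstacle is precisely this last fiberwise approximation: the assignment $s \mapsto (T^*_H S)_s$ need not have locally constant rank at non-principal points, so $T^*_G M \to M$ is not a vector bundle in general, and one cannot naively extend a transverse covector at a singular point to nearby regular points. The resolution I expect to use is that the \emph{dimension} of $(T^*_G M)_x$ is \emph{larger} on smaller orbit types (fewer fundamental vector fields, more transverse directions), so $\dim (T^*_G M)_x \le \dim (T^*_G M)_{x'}$ for $x'$ of principal type near $x$; hence a transverse covector at $x$ does lie in the limit of transverse covectors at nearby principal points, by a dimension-count plus closedness argument, and normalizing to the unit sphere is harmless since $\xi_0 \neq 0$. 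Making this semicontinuity argument precise — ideally by reducing to the linear slice model and invoking that for a linear $H$-action the set $\{(s,\eta) : s \in S_{(K')},\ \eta \perp \mathfrak h\cdot s\}$ is dense in $\{(s,\eta):\eta \perp \mathfrak h \cdot s\}$, which follows because fixing $\eta$ the set of admissible $s$ is a nonempty Zariski-type open condition intersected with the dense principal stratum — is the technical heart of the proof.
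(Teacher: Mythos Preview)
The paper states this lemma as background without proof (it is introduced with ``Let us summarize two important properties\ldots''), so there is no argument in the paper to compare against; I comment only on the soundness of your plan.

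Your treatment of the isomorphism $T^*_G(G \times_H S) \cong G \times_H T^*_H S$ is correct: the identification $T_{[e,s]}(G\times_H S) \cong (\mathfrak g \oplus T_sS)/\{(-Z, Z_S(s)) : Z \in \mathfrak h\}$, together with the observation that $G$-transversality kills the $\mathfrak g$-component, leaves precisely a covector on $T_sS$ annihilating each $Z_S(s)$, i.e.\ an element of $(T^*_H S)_s$; equivariance then assembles the fiberwise identification into the claimed bundle isomorphism.

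Your density argument, however, has a genuine gap at the endgame. The reduction to the linear slice via the first part is the right move, but both proposed ways to finish fail. First, the semicontinuity sketch: you correctly observe that $(T^*_G M)_x$ is \emph{larger} at singular $x$ than at nearby principal $x'$, yet you then write the inequality as $\dim (T^*_G M)_x \le \dim (T^*_G M)_{x'}$ and conclude approximability --- but the correct inequality ($\ge$) is exactly what makes the problem nontrivial, since the limit of the smaller principal fibers could a priori be a proper subspace of the singular fiber. Second, the ``Zariski-type open condition'': for fixed $\eta$, the set $A_\eta := \{s \in S : \eta \perp \mathfrak h \cdot s\}$ is a \emph{closed} linear subspace of $S$ (namely $(\mathfrak h \cdot \eta)^\perp$, using skew-symmetry of the orthogonal $\mathfrak h$-action), not an open set, and nothing you have written rules out $A_\eta$ lying entirely inside a non-principal stratum.

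A correct replacement for this last step: with an $H$-invariant metric identifying $S^* \cong S$, skew-symmetry gives $\langle Z\xi, \xi\rangle = 0$, so the ray $t \mapsto (t\xi, \xi)$ lies in $T^*_H S$. If $\xi \in S^H$ then $\xi \perp \mathfrak h \cdot s$ for every $s$ and any principal $s_n \to 0$ works with $\xi_n = \xi$; otherwise $H_{t\xi} = H_\xi \subsetneq H$, so you have reached a point of strictly smaller isotropy, and you may take a slice there and iterate. Finiteness of the isotropy types terminates the process at the principal stratum.
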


Let $A_{M} \ede \maC(S^*_G M; \End(E))$. Then $\Prim(A_M^G)$, the primitive ideal spectrum of 
$A_M^G$, identifies with the set $\Omega_{M}(E)/G$. See also \cite{EchterhoffWilliams14}.
Explicitly, for any $\sigma \in A_M^G$ and $(\xi, \rho) \in \Omega_M^G(E)$
(see Equation \eqref{eq.def.Gamma.symb}), we define
\begin{equation}\label{eq.pi.xi}
    \pi_{(\xi,\rho)}(\sigma) \ede \pi_\rho(\sigma(\xi))=\sigma(\xi )\vert_{E_{\xi\rho}}.
    %\,,
\end{equation}
Then the map $\bijchi : \Omega_M(E) \to \Prim(A_M^G)$ given by $\bijchi(\xi, \rho) = 
\ker \pi_{(\xi, \rho)}$ induces a bijection $\bijchi_0 : \Omega_M(E)/G \to \Prim(A_M^G)$.

\section{Primitive ideals and the proof of the main theorem}

Let $\overline{\psi^0}(M,E)$ (respectively $\overline{\psi^{-1}}(M,E)$) be the norm closure of 
the algebra of compactly supported
classical, order zero (respectively, order $-1$) pseudodifferential operators on $M$.
Let $K$ be our fixed minimal isotropy group, let 
$\alpha\in \widehat{G}$, and let $\pi_\alpha$ the restriction morphism to the
$\alpha$-isotypical component $L^2(M;E)_\alpha$ of $L^2(M;E)$, see Equation \eqref{eq.def.Pchi}.
As for the discrete case, the most important (and technically difficult)
part is the identification of the quotient $\pi_\alpha(\clop^ G)/\pi_\alpha(\clopn^ G)$.

As in \cite{atiyahGelliptic}, the map $\maC(S^*M; \End(E))^G 
   \rightarrow \pi_\alpha( \clop^G )/\pi_\alpha( \clopn^G )$ 
descends to a surjective map 
\begin{equation}
\widetilde \maR^{\alpha}_{M} : 
A_M^G \ede \maC(S_G^*M; \End(E))^G \to \pi_\alpha(\overline{\psi^0}(M,E)^G)/\pi_\alpha(\overline{\psi^{-1}}(M,E)^G).
\end{equation}
Since the  map $\widetilde
\maR^{\alpha}_{M}$ is surjective, the question of determining the
quotient algebra $\pi_\alpha(\overline{\psi^0}(M,E)^G/
\pi_\alpha(\overline{\psi^{-1}}(M; E)^G)$ is equivalent to the
question of determining the ideal $\ker (\widetilde \maR_{M}^\alpha)
\subset A_M^ G$. In turn, this ideal will be determined by solving the
following problem:

\begin{problem}\label{problem1}
Let $A_M^G := \maC(S^*_G M; \End(E))^G$, as before. Identify the
closed subset
\begin{equation}\label{eq.def.Xi}
   \Xi^\alpha(E) \ede \Prim(A_M^ G/ \ker(\widetilde \maR_{M}^\alpha))
   \, \subset \, \Prim(A_M^ G) \,.
\end{equation}
\end{problem}

\subsection{Calculation on the principal orbit bundle}

By tensoring with $\alpha^*$, we can assume that $\alpha = \one$, the
trivial representation.
Let $x_0\in M_{(K)}$ with $G_{x_0}=K$, 
let $U \subset (T^*_GM)_{x_0}$ be a slice at $x_0$, let and $W=G\exp_{x_0}(U) \cong G/K \times U$ 
be a tube around $x_0$. Let
\begin{equation}\label{eq.def.notation}
    0 \neq \eta \in E_{x_0}^{K}\,,\ \xi \in (S^*_{K} U)_{x_0} \seq
    S^*_{x_0} U\,,\ \mbox{ and } \ f \in \CIc(U)\,, \ f(x_0) = 1 \,,
\end{equation}
where $S_K^*U = S^*U$, since $K$ acts trivially on $U$ by minimality. 
We define then $s_\eta \in \CIc(W; E)^G$ and $e_t \in \maC(W)^G$ by
 $s_\eta( g \exp_{x_0} (y) ) \ede f(y) g\eta$ and 
$  e_t( g \exp_{x_0} (y))
  \ede e^{\imath t \langle y, \xi \rangle}$
that is, they are the $G$-invariant functions
extending the functions $y \mapsto f(y) \eta$ and $y \mapsto
e^{\imath t \langle y, \xi \rangle}$ by $G$-invariance via $W = G
\exp_{x_0}(U)$, where $y \in U \subset T_{x_0}U$ and $t \in \RR$.
Let us notice that, if we let $\Phi_K^G$ denote the Frobenius isomorphism, then
$s_\eta \ede \Phi_K^G(f\eta)$ and $   e_t \seq \Phi_K^G(e^{\imath t \langle \bullet , \xi \rangle})$. 
Using oscillatory testing techniques, see, for instance \cite{Hormander3, Treves}, we obtain.

\begin{proposition}\label{prop.injective}
Assume that $E_{x_0}^K \neq0$. Then, for every $P \in \psi^0(M; E)$, we have
$\lim_{t \to \infty } P ( e_t s_\eta ) (x_0) \seq
  \sigma_0(P)(\xi)\eta$.
In particular, if $P\in\psi^0(M; E)^G$, then
\begin{equation*}
  \lim_{t \to \infty } \pi_{\one}(P) ( e_t s_\eta ) (x_0) \seq
  \sigma_0(P)(\xi)\eta =: \, \pi_{(\xi, \one)} \big (\sigma_0(P)\big )
  \eta\,.
\end{equation*}
Moreover, $\pi_{(\xi, \one)} \in \Xi^{\one}(E)$. Equivalently,
$\bijchi \big ( \Omega^{\one}_{M_{(K)}}(E) \big ) \subset \Xi^{\one}(E)$.
\end{proposition}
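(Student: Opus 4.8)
The plan is to verify the oscillatory-testing limit and then deduce the statement about primitive ideals. First I would prove the pointwise limit $\lim_{t\to\infty} P(e_t s_\eta)(x_0) = \sigma_0(P)(\xi)\eta$ for an arbitrary (not necessarily invariant) $P \in \psi^0(M;E)$, since this is the analytic heart of the matter. Because the statement is local near $x_0$, I would work in a coordinate chart around $x_0$ in which the slice $U$ is identified with a neighborhood of the origin in $\RR^n$, and write $P$ via its full symbol $p(x,\zeta)$. The function $e_t s_\eta$ near $x_0$ is, up to the cutoff $f$ and the smooth frame $g \mapsto g\eta$ trivializing $E$ over the tube, essentially $e^{\imath t \langle y,\xi\rangle} f(y)\eta$; evaluating $P$ on it and changing variables $\zeta \mapsto t\zeta$ in the oscillatory integral gives a family that, by stationary phase / the standard oscillatory-testing lemma (cf.\ \cite{Hormander3, Treves}), converges as $t\to\infty$ to $p(x_0, \xi)\eta = \sigma_0(P)(\xi)\eta$, using $f(x_0)=1$ and that only the leading (homogeneous degree $0$) part of the symbol survives the rescaling while lower-order terms decay. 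The one subtlety beyond the classical scalar case is bookkeeping the bundle $E$ and the $G$-action in the tube $W \cong G/K\times U$: here one uses that $\eta \in E_{x_0}^K$ so that $s_\eta$ is genuinely well defined and $G$-invariant via $s_\eta = \Phi_K^G(f\eta)$, and that the trivialization of $E$ along the $U$-directions can be chosen so the frame is constant to first order at $x_0$.

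Next, for $P \in \psi^0(M;E)^G$ the map $\pi_{\one}(P)$ is just the restriction of $P$ to $G$-invariant sections, and $e_t s_\eta$ is $G$-invariant, so $\pi_{\one}(P)(e_ts_\eta)(x_0) = P(e_ts_\eta)(x_0)$ and the second displayed limit is immediate from the first. Its value $\sigma_0(P)(\xi)\eta$ depends on $P$ only through $\sigma_0(P) \in A_M^G$ (note $\xi \in S_K^*U = S^*U \subset S_G^*M$ by minimality of $K$, so $G_\xi = K$ acts trivially and $\pi_{(\xi,\one)}$ makes sense), and it clearly vanishes on $\overline{\psi^{-1}}(M;E)^G$. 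Hence the functional $P \mapsto \pi_{(\xi,\one)}(\sigma_0(P))\eta$ factors through $\pi_{\one}(\overline{\psi^0}(M;E)^G)/\pi_{\one}(\overline{\psi^{-1}}(M;E)^G)$, i.e.\ through the image of $\widetilde\maR_M^{\one}$, which means $\pi_{(\xi,\one)}$ — more precisely the evaluation it induces on that quotient — is nonzero, so $\ker\pi_{(\xi,\one)} \supset \ker(\widetilde\maR_M^{\one})$ does not contain all of $A_M^G$; I would package this as: the character $\pi_{(\xi,\one)}$ descends to $A_M^G/\ker(\widetilde\maR_M^{\one})$, hence its kernel lies in $\Xi^{\one}(E) = \Prim(A_M^G/\ker(\widetilde\maR_M^{\one}))$.

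Finally, to get the set-level statement $\bijchi(\Omega^{\one}_{M_{(K)}}(E)) \subset \Xi^{\one}(E)$ I would unwind the definitions: a point of $\Omega^{\one}_{M_{(K)}}(E)$ is a pair $(\xi,\rho)$ with $\pi(\xi) \in M_{(K)}$, so after conjugating we may take $G_{\pi(\xi)} = K$ and, since $\xi$ is $G$-transverse and $K$ acts trivially on the slice, $G_\xi = K$ and the only relevant $\rho \in \widehat{K}$ with $E_{\xi\rho}\neq 0$ and $\rho$ $K$-associated to $\one$ is $\rho = \one$ acting on $E_{x_0}^K \neq 0$; thus every such point has, up to the $G$-action, the form $(\xi,\one)$ with $0\neq\eta\in E_{x_0}^K$ as in \eqref{eq.def.notation}, and $\bijchi(\xi,\one) = \ker\pi_{(\xi,\one)}$ was just shown to lie in $\Xi^{\one}(E)$. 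Since $\Xi^{\one}(E)$ and the asserted inclusion are $G$-invariant, this covers the whole orbit and finishes the proof. The main obstacle I anticipate is the first step — making the oscillatory-testing computation rigorous with the vector bundle and the tubular coordinates $W\cong G/K\times U$ present — but this is essentially the classical argument of \cite{atiyahGelliptic, Hormander3, Treves} carried out carefully in the chart, with the $G$-equivariance entering only through the compatibility $s_\eta = \Phi_K^G(f\eta)$, $e_t = \Phi_K^G(e^{\imath t\langle\bullet,\xi\rangle})$ already recorded above.
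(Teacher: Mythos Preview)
Your proposal is correct and follows exactly the approach the paper indicates: the paper does not give a proof of this proposition beyond the sentence ``Using oscillatory testing techniques, see, for instance \cite{Hormander3, Treves}, we obtain,'' and your write-up is precisely the intended filling-in of that reference, together with the routine deduction that $\pi_{(\xi,\one)}$ factors through $A_M^G/\ker(\widetilde\maR_M^{\one})$. The only cosmetic point is the slightly tangled phrasing around ``$\ker\pi_{(\xi,\one)} \supset \ker(\widetilde\maR_M^{\one})$ does not contain all of $A_M^G$''; what you actually need (and then state correctly) is simply that $\ker(\widetilde\maR_M^{\one}) \subset \ker\pi_{(\xi,\one)}$, so $\pi_{(\xi,\one)}$ descends and its kernel is a primitive ideal of the quotient.
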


Above $M_{(K)}$, we also have the opposite inclusion.

\begin{theorem}\label{theorem.princ.str}
We have $\bijchi_0 ( \Omega^{\one}_{M_{(K)}}(E)/G) =
\Xi^{\one}(E) \cap \Prim(A_{M_{(K)}}^G)$. In other words,
$$ \Xi^{\one}_0(E) \ede \Xi^{\one}(E) \cap \Prim(A_{M_{(K)}}^G) \seq
   \{\ker \big (\pi_{(\xi, \one_{G_\xi})} \big )\, \vert \ \xi \in
   S_G^*M_{(K)} ,\, E_\xi^{G_\xi}\neq 0 \}.$$
\end{theorem}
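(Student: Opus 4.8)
The plan is to prove the two inclusions separately. Proposition~\ref{prop.injective} already gives us
$\bijchi_0\big(\Omega^{\one}_{M_{(K)}}(E)/G\big)\subset\Xi^{\one}(E)\cap\Prim(A^G_{M_{(K)}})$,
noting that for $\xi\in S^*_GM_{(K)}$ the isotropy group $G_\xi$ is contained in (a conjugate of) the minimal group $K$, so that $G_\xi$ acts trivially on $E_\xi$ and $E_\xi^{G_\xi}=E_\xi$; moreover $K$-association to $\one$ forces $\rho=\one_{G_\xi}$, which is why the indexing set on the right is exactly $\{\xi\in S^*_GM_{(K)}\mid E_\xi^{G_\xi}\neq0\}$. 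So the substantive content is the reverse inclusion: every primitive ideal of $A^G_{M_{(K)}}$ that survives in the quotient $A^G_M/\ker(\widetilde\maR^{\one}_M)$ is of the form $\ker\pi_{(\xi,\one)}$ for some transverse covector $\xi$ over the principal orbit bundle.

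First I would reduce to the principal orbit bundle itself by the tube theorem: near $x_0\in M_{(K)}$ with $G_{x_0}=K$, the tube $W=G\exp_{x_0}(U)\cong G/K\times U$ identifies $\maC(S^*_GW;\End(E))^G$ with $\maC(S^*U;\End(E_{x_0}))^K\cong\maC(S^*U;\End(E_{x_0}))$ (using $K$ acts trivially on $U$ and Lemma~\ref{lemma.comp.transverse}), and correspondingly identifies the compression $\pi_{\one}(\psdo(W;E)^G)$ with the ordinary pseudodifferential algebra $\psdo(U;E_{x_0})$ via the Frobenius isomorphism $\Phi_K^G$. Under this identification the map $\widetilde\maR^{\one}$ becomes the usual principal symbol map on $U$, whose kernel on $A^G_{M_{(K)}}=\maC(S^*_GM_{(K)};\End(E))^G$ is by classical pseudodifferential theory the set of sections vanishing identically; hence every point of $S^*U$ gives a surviving primitive ideal. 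Patching these local statements over a cover of $M_{(K)}$ by tubes — and using that $S^*_GM_{(K)}$ is connected and that the algebra $A^G_{M_{(K)}}$ is a continuous-trace algebra with spectrum $\Omega_{M_{(K)}}(E)/G$ (the $\bijchi_0$ bijection of Section~\ref{sec2}) — yields that $\Xi^{\one}(E)\cap\Prim(A^G_{M_{(K)}})$ is all of $\Prim(A^G_{M_{(K)}})$, which by the $\bijchi_0$ description equals $\bijchi_0\big(\Omega^{\one}_{M_{(K)}}(E)/G\big)$.

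The main obstacle I expect is the gluing step: showing that a section $\sigma\in A^G_{M_{(K)}}$ lying in $\ker(\widetilde\maR^{\one}_M)$ must vanish at a given transverse covector $\xi$ over $M_{(K)}$ cannot be read off purely locally, because $\ker(\widetilde\maR^{\one}_M)$ is defined as a quotient of the global algebra $\clop(M;E)^G$ and a priori could be strictly larger than what the local pseudodifferential symbol calculus predicts. The way around this is the oscillatory-testing argument of Proposition~\ref{prop.injective}: for each $\xi\in S^*_GM_{(K)}$ and $0\neq\eta\in E_{x_0}^K$ the functionals $P\mapsto\lim_{t\to\infty}\pi_{\one}(P)(e_ts_\eta)(x_0)$ factor through $\pi_{\one}(\clop(M;E)^G)/\pi_{\one}(\clopn(M;E)^G)$ and realize the evaluation $\pi_{(\xi,\one)}$; since these functionals are nonzero, $\ker\pi_{(\xi,\one)}$ genuinely survives in the quotient, giving $\pi_{(\xi,\one)}\in\Xi^{\one}(E)$. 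Conversely, any primitive ideal in $\Xi^{\one}(E)\cap\Prim(A^G_{M_{(K)}})$ is $\ker\pi_{(\xi,\rho)}$ for a unique $G$-orbit of $(\xi,\rho)\in\Omega_{M_{(K)}}(E)$ by $\bijchi_0$, and the minimality of $K$ on the principal orbit bundle forces $\rho=\one_{G_\xi}$ and $E_\xi^{G_\xi}=E_\xi$, so it already lies in $\bijchi_0\big(\Omega^{\one}_{M_{(K)}}(E)/G\big)$. Combining this with the inclusion from Proposition~\ref{prop.injective} closes the loop and proves the claimed equality.
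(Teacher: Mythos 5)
Your proposal has a genuine gap, and it originates in a false structural claim at the very first step: that for $\xi\in S^*_GM_{(K)}$ the isotropy group $G_\xi\cong K$ ``acts trivially on $E_\xi$'' so that $E_\xi^{G_\xi}=E_\xi$ and, by $K$-association, $\rho$ is forced to be $\one_{G_\xi}$. This is not true. Minimality of $K$ means $K$ acts trivially on the slice $U$ (hence $G_\xi=K$ for $\xi$ over $M_{(K)}$), but $K$ may act on the fiber $E_{x_0}$ through a nontrivial representation. That is precisely why the theorem's statement contains the nontrivial restriction $E_\xi^{G_\xi}\neq 0$ and picks out only $\rho=\one_{G_\xi}$: the set $\widehat K$ contributes many candidate $\rho$'s for each $\xi$, namely all irreducibles of $K$ occurring in $E_{x_0}$, and the whole content of the reverse inclusion is to rule out every $\rho\neq\one_K$ and every $\xi$ with $E_\xi^K=0$. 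If the automatic-triviality claim were correct, $\Omega^\one_{M_{(K)}}(E)/G$ would coincide with $\Omega_{M_{(K)}}(E)/G$ and the theorem would reduce to a tautology.

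The same error propagates through your ``reduction to the principal orbit bundle.'' Over a tube $W=G\times_K U$, Frobenius reciprocity identifies $L^2(W;E)^G$ with $L^2(U;E_{x_0}^K)$, not $L^2(U;E_{x_0})$; hence $\pi_\one(\psdo(W;E)^G)$ is pseudodifferential on the bundle with fiber $E_{x_0}^K$, while the symbol algebra $A_W^G\cong\maC(S^*U;\End(E_{x_0})^K)$ carries the full invariant endomorphism algebra $\End(E_{x_0})^K$. The map $\widetilde\maR^\one$ therefore factors through the restriction $\End(E_{x_0})^K\to\End(E_{x_0}^K)$, whose kernel consists exactly of the symbols supported on nontrivial isotypical components of $E_{x_0}$. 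Your argument concludes that $\Xi^\one(E)\cap\Prim(A^G_{M_{(K)}})$ is all of $\Prim(A^G_{M_{(K)}})$, which is strictly false once $K$ acts nontrivially on $E$. The paper closes this gap with two short contradictions: (a) if $E_{x_0}^K=0$ then $L^2(W;E)^G=0$ and $\ker(\widetilde\maR^\one_W)=A_W^G$, contradicting that $\ker\pi_{(\xi,\rho)}$ is primitive; (b) if $\rho\neq\one_K$, the $G$-invariant section $\Phi_K^G(fp_\rho)$ built from the isotypical projection $p_\rho$ lies in $\ker(\widetilde\maR^\one_M)$ (because $p_\rho p_{\one_K}=0$) yet satisfies $\pi_{(\xi,\rho)}(\Phi_K^G(fp_\rho))=p_\rho\neq 0$, contradicting membership of $\ker\pi_{(\xi,\rho)}$ in $\Xi^\one_0(E)$. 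Your write-up contains no analogue of step (b), which is the heart of the reverse inclusion; adding it would make the proof correct and bring it in line with the paper's.

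Finally, two smaller remarks: the first inclusion is correctly delegated to Proposition~\ref{prop.injective}, and the observation that $K$-association of $\rho\in\widehat{K}$ to $\one_G$ forces $\rho=\one_K$ is right; these parts can be kept. But the patching/connectivity argument you invoke is both unnecessary and, as noted above, leads to the wrong conclusion, so it should be dropped entirely.
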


\begin{proof}
Notice that $\{\ker \big (\pi_{(\xi, \one_{G_\xi})} \big )\, \vert \ \xi \in
   S_G^*M_{(K)} ,\, E_\xi^{G_\xi}\neq 0 \}\subset\bijchi_0 ( \Omega^{\one}_{M_{(K)}}(E)/G)$.
   Reciprocally, by conjugation, we can assume that $\ker \big (\pi_{(\xi, \rho)} \big )
   \in \bijchi_0 ( \Omega^{\one}_{M_{(K)}}(E)/G)$ is such that $G_\xi=K$ and then $\Hom_K(\rho , \one_G)\neq 0$ is equivalent
   to $\rho=\one_K \in \hat{K}$.
Proposition \ref{prop.injective} states that $\bijchi
( \Omega^{\one}_{M_{(K)}}(E) ) \subset \Xi^{\one}(E) $, and hence 
$\bijchi ( \Omega^{\one}_{M_{(K)}}(E)) \subset \Xi^{\one}_0(E)$.
Let $\bijchi(\xi,\rho)=\ker(\pi_{(\xi,\rho)}) \in \Xi^\one_0(E)$, by definition this means:
\begin{itemize}
\item $(\xi, \rho) \in \Omega_{M}(E)$ and $\xi \in S^*_G M_{(K)}$;

\item $\pi_{(\xi, \rho)}$ is as defined in Equation
  \eqref{eq.pi.xi}; and, most importantly,

\item  $\pi_{(\xi,\rho)}  \vert_{\ker (\widetilde \maR^\one_{M})} = 0$.
\end{itemize} 
We need to prove that $(\xi, \rho) \in \Omega_{M_{(K)}}^{\one}(E)$. 
As before, we can assume that $G_\xi = K$ and therefore we only need to prove that 
$E_\xi^K \neq 0$ and  $ \rho = \one_K$.
Since $\pi(\xi) := x_0 \in M_{(K)}$, we can replace $M$ with the tube $W$ as before.
We shall prove that $E_\xi^K \neq 0$ by
contradiction. Indeed, if $E_{x_0}^K = 0$, then $L^2(W; E)^G = 0$, by
induction, and hence $\ker(\widetilde \maR^\one_{W}) =
A_{W}^G$. But this is not possible since  $\ker(\widetilde\maR^\one_{W}) \subset \ker(\pi_{(\xi,\rho)})$ 
and $A^G_W$ is not a primitive ideal.
  
We shall prove that $\rho = \one_K$ by contradiction, as well.
Assume hence that $\one_K \neq \rho \in \widehat{K}$.  Let $p_{\rho}$ be the projection
onto the isotypical component corresponding to $\rho$ in
$\End(E_\xi)^K \simeq \End(E_x)^K$. We have $p_\rho \neq 0$ since
$(\xi , \rho) \in \Omega_{M}(E)$. Let $f \in \CIc(U)$ be equal to $1$
near $x_0$ and extend $f p_{\rho}$ to a $G$--invariant element
$\Phi_K^G(f p_{\rho}) \in \CI(M; \End(E))^{G} \subset \overline{\psi^0}(M,E)^G$ via $W =
G\exp(U)$. We have
\begin{equation*}
  \pi_{(\xi,\rho)}(\Phi_K^G(f p_{\rho})) \seq p_{\rho} \quad \mbox{and} \quad 
  \pi_{\one}(\Phi_K^G(fp_\rho)) =
\Phi_K^G(fp_\rho)p_{\one_G} = 0  \,,
\end{equation*}
by construction, because $p_{\one_G}(y)=p_{\one_K}$ for any $y\in U$ and $\rho\neq \one_K$. 
 Therefore,
$\Phi_K^G(fp_\rho)\in \ker( \widetilde \maR_{M}^{\one})$ and that
$\pi_{(\xi,\rho)}(\Phi_K^G(fp_\rho))\neq 0$, which contradicts our
assumption that $\pi_{(\xi,\rho)}\vert_{\ker( \widetilde
  \maR_{M}^{\one})} = 0$. Hence $\rho = \one_K$. 
  %This completes our
%proof.
\end{proof}

\subsection{Calculation for singular $(\xi,\rho)$}

\subsubsection{A particular neighborhood of $(\xi,\rho)$.}
Let $H$ be a closed subgroup of $G$ and $V$ a vector space on which $H$ acts by isometries. 
Let $W=G \times_H V$ and assume that $E=G \times_H (V\times E_0)$ for some $H$-module $E_0$.
Let $(\xi, \rho) \in \Omega_{W}(E)$ be our fixed element. Then here 
$S^*W=G \times_H (V \times S^*_xW)=G \times_H  (V \times S)$,
where $S \subset (\operatorname{Lie}(G)/\operatorname{Lie}(H))^* \times V^*$ is the unit sphere.
We assume that $x=\pi(\xi)=H(e,0)$. Thus $\xi \in
(S^*_ GW)_{x}=(G \times_H S^*_HV)_{x}=S^*_0V$ and $\rho \in \widehat{ G}_\xi$ is an irreducible
representation that appears in $E_0=E_\xi = E_x$ (by the definition
of $\Omega_{W}(E)$). We can assume that $K \subset G_\xi$. Let, as before, $p_\rho$ denote 
the projection onto the $\rho$-isotypical component, as before \ref{sec2}.
Let $F_\xi := \{\rho' \in \widehat{G}_\xi, \rho' \subset E_0\ \mbox{and } 
\rho^{'K}\neq 0\}$ and $p_K  \ede  \bigoplus_{\rho' \in F_\xi} p_{\rho'} 
  \in \End(E_0)^{G_\xi}$ and $ p_{\xi,K} = \oid_{E_0} - p_K$.
  The
extension $\widetilde q$ to the sphere $S_x^*W$ will not be constant. We thus
define first $p \in \maC_c^\infty(S^*_{x} W; \End(E))^{G_\xi} \simeq
  \maC_c^\infty(S^*_{x} W; \End(E_0))^{G_\xi}$
  such that $p(\xi) = p_{\xi,K}$. Let 
$\overline{U'_\xi} \subset U_\xi \, \subset \,  T_\xi (S_{x}^*W)\cap
      \operatorname{Lie} (H)^{\perp} \, =: \, (T_{H} S_{x}^*W
    )_\xi$
be slices at $\xi$ for
the action of $H$ on the sphere $S_{x}^*W$.  Let $f\in
\maC_c^\infty(U_\xi)$ be $\ge 0$ and equal to $1$ on
$U'_\xi$. We then define $ p \in \maC^\infty(S^*_{x} W; \End(E))^{H} \, \simeq\,
     \maC^\infty(S^*_{x} W; \End(E_0))^{H}$ by 
\begin{equation}\label{eq.tilde.p}
  \begin{gathered}
     p(g \exp(\eta)) \ede
\begin{cases}
     \, f(\eta) g p_{\xi,K} g^{-1} \,,\ & \mbox{ for } \ \eta \in
     U_\xi \,, \ g \in H \\
     \quad 0 \,, &  \mbox{ {\em outside} the tube }\ H
     \exp_\xi(U_\xi) \,.
     \end{cases}
  \end{gathered}
\end{equation} 
Using the fact that $E$ and $S^*W$ are trivial on $V$, 
we then extend $p$ to an element $\widetilde
p \in \maC^\infty(S^* W\vert_{V}; \End(E))^{H} \simeq
\maC^\infty(S_x^*W \times V; \End(E_0))^{H}$ {\em that is
  independent of the coordinate $V$} and then we finally further
extend this element by $G$--invariance to an element
$q$ in $\maC^\infty(S^*W; \End(E))^{G}$, i.e.,
\begin{equation*}\label{eq.tilde.q}
     q \ede \Phi_{H}^G(\widetilde p) \in \maC^\infty(S^*
     W; \End(E))^{G}\,,\ \mbox{ where }\ \widetilde p(y,
     \zeta) \ede p(\zeta), \ \zeta \in S_x^*W\,.
\end{equation*}
Finally, let $h \in \CI_c(W,[0,1])^G$ be a function equal to $1$ on a $G$-invariant neighborhood 
 of $(x,\xi)$ and let $\widetilde q=hq \in \CI_c(S^*W,\End(E))^G$. 

Recall that $p_{\one_G}$ is defined for any $s \in \CI(W,\End(E))$ by 
$(p_{\one_G}s)(z)=\int_G g(s(g^{-1}z)) dg,$
and that above $W_{(K)}$ this defines a bundle morphism $\mathbb{P}:=p_{\one_G}\vert_{W_{(K)}} \in \CI(W_{(K)},\End(E))$.
More precisely, we have $\mathbb{P}(y) = p_{\one_{G_y}}$, for $y \in W_{(K)}$. 
Then $\mathbb{P} \in \CI(W_{(K)}; \End(E))^G$ is a smooth map of projections.

\begin{lemma}\label{lemma.q}
Let $\bijchi(\xi,\rho)\in \Omega_W(E) $ with $K \subset G_\xi$. Assume that $\rho^K=0$. Then
\begin{enumerate}[(i)]
\item $\pi_{(\xi,\rho)}(\widetilde q) = p_\rho \neq 0$.
\item $\pi_{(\zeta,\one_{G_\zeta})}(\widetilde q)=0$, for all $\zeta \in S^*M_{(K)}$.
\item $\mathbb{P} \widetilde q\vert_{S^*M_{(K)}} = 0$.
\item $V_{\widetilde q} = \{(\xi,\rho)\in \Omega_M(E) \mid \pi_{(\xi,\rho)}(q)\neq 0\}$ is an open 
neighbourhood of $\bijchi(\xi,\rho)$ and $V_{\widetilde{q}} \cap \Xi^\one_0(E)=\emptyset$. 
In particular, $\bijchi(\xi,\rho)\notin \overline{\Xi^\one_0(E)}  $.
\end{enumerate}
\end{lemma}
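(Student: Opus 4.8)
The plan is to establish the four items in order, the third being the heart of the matter. First I would fix notation: working inside the tube I use the trivialisations $E\cong G\times_H(V\times E_0)$ and $S^*W|_V\cong S^*_xW\times V$ together with the Frobenius isomorphism $\Phi_H^G$, so that $\widetilde q=hq$ is the $G$-invariant, $V$-independent extension of the $H$-invariant symbol $p$ of \eqref{eq.tilde.p}, which is supported in the tube $H\exp_\xi(U_\xi)$ of $\xi$ with $p(g\exp_\xi\eta)=f(\eta)\,g\,p_{\xi,K}\,g^{-1}$ there. The one algebraic fact used repeatedly is that \emph{$p_{\xi,K}=\oid_{E_0}-p_K$ annihilates $E_0^{K''}$ for every subgroup $K''\subset G_\xi$ that is $G_\xi$-conjugate to $K$}: decomposing $E_0$ into $G_\xi$-isotypical pieces shows $(E_0^{\rho'})^K\neq0\Rightarrow\rho'\in F_\xi$, so $E_0^K\subset\operatorname{range}(p_K)=\ker p_{\xi,K}$, and $G_\xi$-equivariance of $p_{\xi,K}$ then kills $g(E_0^K)=E_0^{gKg^{-1}}$ for all $g\in G_\xi$.

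For (i): since $h\equiv1$ near $x=\pi(\xi)$ one reads off $\widetilde q(\xi)=q(\xi)=p(\xi)=p_{\xi,K}$ from \eqref{eq.tilde.p} (take $g=e$, $\eta=0$, $f(0)=1$); the hypothesis $\rho^K=0$ means $\rho\notin F_\xi$, so $p_{\xi,K}$ restricts to the identity on the $\rho$-isotypical subspace $E_{\xi\rho}$ of $E_\xi$, giving $\pi_{(\xi,\rho)}(\widetilde q)=\pi_\rho(p_{\xi,K})=p_\rho\neq0$ (nonzero since $(\xi,\rho)\in\Omega_W(E)$ forces $\rho\subset E_0$). This also records $\bijchi(\xi,\rho)\in V_{\widetilde q}$, which will be used in (iv) (note $q=\widetilde q$ near $\xi$).

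For (iii), which contains (ii), fix $\zeta\in S^*_GW_{(K)}$. Using $G$-invariance of $\mathbb P\widetilde q$ I translate so that $\pi(\zeta)=y=[e,v_1]$ with $v_1$ an $H$-principal vector of $V$; then $G_\zeta=G_y=H_{v_1}$, because over the principal stratum of $W$ an isotropy group fixes every transverse covector (slice theorem, \cite{tomDieckTransBook}). If the fibre coordinate $s\in S^*_xW$ of $\zeta$ does not lie in the tube $H\exp_\xi(U_\xi)$, then $\widetilde q(\zeta)=0$ by \eqref{eq.tilde.p} and there is nothing to prove; otherwise $s=g_2\exp_\xi(\eta)$ with $g_2\in H$, $\eta\in U_\xi$, so $\widetilde q(\zeta)=h(y)f(\eta)\,g_2\,p_{\xi,K}\,g_2^{-1}$. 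Since $G_\zeta=H_{v_1}$ must fix $s$, one gets $g_2^{-1}H_{v_1}g_2\subset H_{\exp_\xi(\eta)}=(G_\xi)_\eta\subset G_\xi$, and $K'':=g_2^{-1}H_{v_1}g_2=H_{g_2^{-1}v_1}$ is a principal isotropy of $W$ sitting inside $G_\xi$. The key step is to check that $K''$ is in fact $G_\xi$-\emph{conjugate} to $K$; I would do this by applying the slice theorem to the $G_\xi$-action at $\xi$, using that the principal orbit type of a slice representation equals that of the ambient space, and invoking Lemma \ref{lemma.comp.transverse} (density of $S^*_GW_{(K)}$ in $S^*_GW$) to guarantee that $K$ itself is realised as such a stabiliser arbitrarily near $\xi$. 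With that in hand, the algebraic fact gives $p_{\xi,K}(g_2^{-1}v)=0$ for every $v\in E_0^{H_{v_1}}=E_\zeta^{G_\zeta}$, i.e.\ $\widetilde q(\zeta)$ kills $E_\zeta^{G_\zeta}$; as $\widetilde q(\zeta)$ is self-adjoint and commutes with $G_\zeta$, this is equivalent to $\mathbb P(y)\,\widetilde q(\zeta)=0$ and to $\pi_{(\zeta,\one_{G_\zeta})}(\widetilde q)=0$. The identical computation with $q$ in place of $\widetilde q$ shows $\pi_{(\zeta,\one_{G_\zeta})}(q)=0$ for every $\zeta\in S^*_GW_{(K)}$.

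Finally, for (iv): $V_{\widetilde q}=\{J\in\Prim(A_M^G)\mid q\notin J\}$ is a basic open set of the Jacobson topology, hence open; it contains $\bijchi(\xi,\rho)$ by (i); and by the last sentence of the previous step it misses $\Xi^\one_0(E)=\bijchi_0(\Omega^\one_{M_{(K)}}(E)/G)$ (Theorem \ref{theorem.princ.str}). Since an open set disjoint from a subset is disjoint from its closure, $\bijchi(\xi,\rho)\notin\overline{\Xi^\one_0(E)}$. The one genuinely delicate point in all of this is the $G_\xi$-conjugacy claim for $K''$: it is exactly what forces $p_{\xi,K}$ to annihilate the relevant trivial-isotypical subspaces, and hence it is the crux of the lemma; everything else is routine tracking through the Frobenius isomorphism and the slice decomposition $S^*W|_V\cong S^*_xW\times V$.
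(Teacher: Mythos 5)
Your overall strategy coincides with the paper's: item (i) is the same direct computation; items (ii)--(iii) are reduced by $G$- and $H$-invariance to a statement about $p_{\xi,K}$; item (iv) is the same Jacobson-topology argument using Theorem~\ref{theorem.princ.str}. Where you go beyond the paper's wording is in unpacking the sentence \emph{``in view of the support of $\widetilde q$ and its $G$-invariance, this reduces to $p_{\xi,K}\,p_{\one_K}=0$''} into the explicit two-stage conjugation (first by $G$ into the slice $V$, then by $g_2\in H$ into the tube $H\exp_\xi(U_\xi)$), and in naming the resulting $G_\xi$-conjugacy of $K''=g_2^{-1}G_\zeta g_2$ to $K$ as the crux. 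That is indeed what the paper's sentence silently uses, and it is a standard transformation-group fact: two $G$-conjugate principal isotropy groups both contained in $G_\xi$ are already $G_\xi$-conjugate, since each is a principal isotropy of the $G_\xi$-action on a connected slice; so your sketch is the right route and should simply be written out.

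One genuine restriction in your argument remains, however. You fix $\zeta\in S^*_GW_{(K)}$ throughout, which uses $G_\zeta=G_{\pi(\zeta)}$ and yields the inclusion $K''\subset(G_\xi)_\eta\subset G_\xi$. This gives (ii) on $S^*_G M_{(K)}$ and hence the disjointness needed for (iv), but it does not yield the full statement (iii), $\mathbb P\widetilde q\vert_{S^*M_{(K)}}=0$, which is asserted for \emph{all} covectors over $M_{(K)}$ and is the form actually invoked in the quantization step of Lemma~\ref{lem.proof.main.thm} (there $\eta/|\eta|$ ranges over the whole cosphere, not just the transverse part). For a non-transversal $\zeta$ one has $G_\zeta\subsetneq G_{\pi(\zeta)}$, your slice-theorem identity $G_\zeta=H_{v_1}$ fails, and $g_2^{-1}G_{\pi(\zeta)}g_2$ need not sit in $G_\xi$, so the appeal to $G_\xi$-equivariance of $p_{\xi,K}$ no longer applies directly. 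The paper's proof is equally terse at this point, but if you want a self-contained argument you should address it — e.g.\ by showing that after the $G$- and $H$-reductions the group $H_{g_2^{-1}y}$ is still $G_\xi$-conjugate to $K$, or by shrinking $\supp h$ and $U_\xi$ so that this holds.
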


\begin{proof}
Let us notice first that
$p_\rho \in \End(E_\xi)^{G_\xi}=\End(E_0)^{G_\xi}$ is non zero because $\bijchi(\xi,\rho)\in \Omega_W(E)$ 
implies that $\rho \subset E_\xi$, by definition.
\
(i)\ Since $\rho^K=0$ we get that $\rho \notin F_\xi$. Therefore, for any 
$\rho'\in F_\xi$, $p_{\rho'}p_\rho=0$. 
This gives that $\pi_{(\xi,\rho)}(\widetilde q) 
= p_\rho -\bigoplus_{\rho'\in F_\xi} p_{\rho'} p_{\rho} = p_{\rho} 
= Id_{E_{\xi\rho }} \in \End(E_{\xi\rho })$. \
(ii)\
In view of the support of $\widetilde{q}$ and its $G$-invariance, this reduces 
to $p_{\xi,K} p_{\one_K}=p_{\one_K}p_{\xi,K}=0$
because $E_\xi^K=\bigoplus_{\rho'\in F_\xi} E_{\xi\rho'}^K$ and therefore 
$p_K p_{\one_K}=p_{\one_K}$.\ (iii)\ This is just another formulation of (ii). 
\ (iv)\ It is standard that the set $V_{\widetilde{q}}$ is open, see for example \cite{BCLN2}. By 
Theorem \ref{theorem.princ.str} $\Xi^\one_0(E)=\{\ker(\pi_{(\zeta,\one_{G_\zeta})}), 
\zeta \in S^*_GW_{(K)},\ E_{\zeta}^{G_\zeta}\neq 0\}$ and then $(2)$ implies $(3)$. 
\end{proof}

\subsubsection{Density of {$\Xi_0^{\one}$} in {$\Xi^{\one}$}}

We freely use the notations from the previous section.

\begin{lemma}\label{lem.proof.main.thm}
Assume that $(\xi,\rho) \notin \overline{\Xi^\one_0(E)}$ and that $K\subset G_\xi$. Then 
\begin{enumerate}[(i)]
\item $\rho^K=0$, in particular $\Omega^\one_M(E)/G=\overline{\Xi^\one_0(E)}
=\overline{\Omega^\one_{M_{(K)}}(E)/G}$ and
\item there is $\widetilde{Q} \in B_W^G$ such that $\sigma_0(\widetilde{Q})
=\widetilde q$ and $\widetilde{Q} \vert_{L^2(W,E)^G}=0$.
\end{enumerate}

\end{lemma}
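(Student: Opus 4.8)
The plan is to prove the two assertions of Lemma~\ref{lem.proof.main.thm} in order, with item (i) reducing to a contrapositive argument built on Lemma~\ref{lemma.q}, and item (ii) requiring an explicit construction of a $G$-invariant operator with prescribed symbol that vanishes on the $G$-invariant $L^2$-sections.

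\medskip

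\emph{Proof of (i).}\ First I would argue by contraposition: suppose $\rho^K \neq 0$. After conjugating (using that $G$ acts transitively on the conjugates of $K$ contained in $G_\xi$), we may arrange $K \subset G_\xi$ and that $\rho$ restricted to $K$ contains the trivial representation, i.e.\ $\Hom_K(\rho, \one_K) \neq 0$. By the definition of $\Omega_M^{\one}$ in Equation~\eqref{eq.def.Xalpha} (with $\alpha = \one$), this says exactly that $(\xi,\rho) \in \Omega_M^{\one}(E)$. Since $\pi(\xi) \in M_{(K)}$ whenever $G_\xi = K$ (and in general $\pi(\xi)$ lies in the closure of the principal orbit bundle), Proposition~\ref{prop.injective} together with the density statement in Lemma~\ref{lemma.comp.transverse} and Theorem~\ref{theorem.princ.str} gives $\bijchi(\xi,\rho) \in \overline{\bijchi_0(\Omega^{\one}_{M_{(K)}}(E)/G)} = \overline{\Xi^{\one}_0(E)}$, contradicting the hypothesis. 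Hence $\rho^K = 0$. The chain of equalities $\Omega^\one_M(E)/G = \overline{\Xi^\one_0(E)} = \overline{\Omega^\one_{M_{(K)}}(E)/G}$ then follows: the inclusion $\Omega^\one_{M_{(K)}}(E)/G \subseteq \Xi^\one_0(E)$ is Proposition~\ref{prop.injective}, the reverse on the principal bundle is Theorem~\ref{theorem.princ.str}, and what (i) has just shown is that no primitive ideal associated to a pair with $\rho^K = 0$ can lie in $\overline{\Xi^\one_0(E)}$; since every point of $\Omega^\one_M(E)$ either lies over $M_{(K)}$ or is a limit of such (by density of $S^*_G M_{(K)}$ in $S^*_G M$ and continuity of the associated-representation condition), the closures coincide.

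\medskip

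\emph{Proof of (ii).}\ Now that (i) supplies $\rho^K = 0$, I am in the situation of Lemma~\ref{lemma.q}, whose symbol $\widetilde q \in \CI_c(S^*W, \End(E))^G$ satisfies $\pi_{(\xi,\rho)}(\widetilde q) = p_\rho \neq 0$ and $\mathbb{P}\,\widetilde q\vert_{S^*W_{(K)}} = 0$. The plan is to quantize $\widetilde q$: choose any $G$-invariant classical pseudodifferential operator $Q_0 \in \psi^0(W; E)^G$ with principal symbol $\sigma_0(Q_0) = \widetilde q$ (possible because $\widetilde q$ is compactly supported and $G$-invariant, by averaging an arbitrary quantization over $G$). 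This $Q_0$ need not annihilate $L^2(W;E)^G$, but its restriction $\pi_{\one}(Q_0)$ to the $G$-invariant sections is, modulo $\pi_{\one}(\overline{\psi^{-1}}(W;E)^G)$, governed by the image of $\widetilde q$ under the restriction map $\widetilde{\maR}^{\one}_W$. Using item (iv) of Lemma~\ref{lemma.q} — namely $V_{\widetilde q} \cap \Xi^\one_0(E) = \emptyset$ — together with the already-established identification of $\Xi^\one_0$ on the principal orbit bundle, one sees that $\widetilde{\maR}^{\one}_W(\widetilde q)$ is supported away from $\overline{\Xi^\one_0(E)}$; I would then invoke the structure of the algebra $A^G_W = \maC(S^*_G W; \End(E))^G$ and its primitive ideal space to correct $Q_0$ by an element of $\overline{\psi^{-1}}(W;E)^G$ (or rather, to replace it by $\widetilde Q := (\oid - \mathbb{P})$-conjugated version, using that $\mathbb{P}$ is a smooth field of projections onto the $\one$-isotypical fibres over $W_{(K)}$) so that the resulting operator $\widetilde Q$ still has $\sigma_0(\widetilde Q) = \widetilde q$ but now kills $L^2(W;E)^G$. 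Concretely: since $\mathbb{P}\widetilde q\vert_{S^*W_{(K)}} = 0$, the operator $\widetilde Q$ obtained by sandwiching a quantization of $\widetilde q$ appropriately has the property that $\pi_{\one}(\widetilde Q)$ has vanishing principal symbol after restriction, and by an induction argument over the order (or directly, since $L^2(W;E)^G = L^2\text{-}\Ind_H^G$ of the $H$-invariants, and $\widetilde q$ was built to vanish on the relevant isotypical piece) one gets $\widetilde Q\vert_{L^2(W;E)^G} = 0$ exactly.

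\medskip

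The main obstacle I expect is step (ii): passing from ``$\widetilde{\maR}^\one_W(\widetilde q)$ vanishes near $\overline{\Xi^\one_0}$'' to an actual operator $\widetilde Q$ that \emph{identically} annihilates $L^2(W;E)^G$, rather than merely annihilating it modulo lower-order (compact) terms. The subtlety is that surjectivity of $\widetilde{\maR}^\one_W$ only controls things modulo $\pi_\one(\overline{\psi^{-1}})$, so one must either argue that the correction term can be absorbed — using that on the tube $W = G\times_H V$ the $G$-invariant $L^2$-sections are an induced module and the projection $\mathbb{P}$ is globally defined and smooth on $W_{(K)}$, so a genuine (not just ``mod compacts'') vanishing can be engineered — or else one must set up a small induction on the pseudodifferential order, peeling off one homogeneous term at a time, each time using the analogue of Lemma~\ref{lemma.q}(iii) for the lower-order symbols. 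I would pursue the first route, exploiting the product structure $W \cong G/H \times V$ and the fact (from Lemma~\ref{lemma.comp.transverse}) that $T^*_G(G\times_H V) \cong G\times_H T^*_H V$, which lets one write $Q_0$ as induced from an $H$-operator on $V$ and reduces the vanishing to the fibre, where $\rho^K = 0$ makes the relevant isotypical component of the $H$-representation on $E_0$ orthogonal to the range of $\mathbb{P}$.
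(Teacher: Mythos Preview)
Your argument for (i) points in the right direction but skips the actual computation. Invoking density of $S^*_G M_{(K)}$ in $S^*_G M$ together with Proposition~\ref{prop.injective} and Theorem~\ref{theorem.princ.str} does not by itself show that $\ker\pi_{(\xi,\rho)}$ lies in the Jacobson closure of $\Xi^{\one}_0(E)$: you must check that every $\sigma \in A_M^G$ vanishing on $\Xi^{\one}_0(E)$ also satisfies $\pi_{(\xi,\rho)}(\sigma)=0$. The paper does this explicitly: it takes a separating $\sigma$ (one with $\pi_{(\xi,\rho)}(\sigma)=\oid$ and $V_\sigma \cap \Xi^\one_0(E)=\emptyset$), approximates $\xi$ by $\zeta_n \in S^*_G W_{(K)}$, and observes that on the tube $E_{\zeta_n}^{G_{\zeta_n}} = E_\xi^K$, so $\sigma(\zeta_n)\vert_{E_\xi^K} = \pi_{(\zeta_n,\one)}(\sigma)=0$. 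If $\rho^K \neq 0$ then $E_{\xi\rho}^K \neq 0$, and continuity forces $\pi_{(\xi,\rho)}(\sigma)$ to have nontrivial kernel, contradicting invertibility. This Schur-type step---that vanishing on $E_{\xi\rho}^K$ forces vanishing on all of $E_{\xi\rho}$---is what your ``density and continuity'' sentence is hiding.

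Your part (ii) has a genuine gap. You correctly identify the difficulty as upgrading ``vanishing modulo $\pi_\one(\overline{\psi^{-1}})$'' to exact vanishing on $L^2(W;E)^G$, but neither proposed fix works. Correcting $Q_0$ by an order~$-1$ operator does not touch the principal symbol but also gives no control on the next term, so an induction on order would have to produce, at each step, a lower-order symbol again annihilated by $\mathbb{P}$---and you give no mechanism for that. Conjugating by $\oid - \mathbb{P}$ is ill-defined: $\mathbb{P}$ is only a smooth bundle map over the open dense set $W_{(K)}$, not a pseudodifferential operator on $W$. The paper avoids all of this by exploiting the special structure of $\widetilde q$: on each chart it is a cutoff times a \emph{fixed} endomorphism that is independent of the $V$-coordinate, so one may quantize by $Op(a_i)$ with amplitude $a_i(y,z,\eta)=\chi_i(y)\psi_i(\eta)\,\widetilde q(y,\eta/|\eta|)\,\chi_i(z)$. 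Lemma~\ref{lemma.q}(iii) then says $\mathbb{P}(y)a_i(y,z,\eta)=0$ for $y\in W_{(K)}$ at the level of the \emph{full} amplitude, not just its principal part. This gives $\mathbb{P}(y)Q_i s(y)=0$ pointwise on $W_{(K)}$, hence after averaging $\mathbb{P}\widetilde Q=0$ on $L^2(W;E)$; since $\widetilde Q$ preserves $G$-invariant sections and $\mathbb{P}$ is the identity on them, $\widetilde Q\vert_{L^2(W;E)^G}=0$ exactly. The key idea you are missing is that a carefully chosen quantization of $\widetilde q$ inherits the vanishing of Lemma~\ref{lemma.q}(iii) to all orders at once.
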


\begin{proof}
(i)\ We already showed in Theorem \ref{theorem.princ.str} that $\Xi^\one_0(E)=\Omega^\one_{W_{(K)}}(E)/G$.
Since $\ker(\pi_{(\xi,\rho)})$ is maximal 
($A^G_M/\ker(\pi_{(\xi,\rho)})=\End(E_{\xi\rho})^{G_\xi}=M_k(\CC)$ since $\rho \in \widehat{G}_\xi$),
there is $\sigma \in A^G_M$ such that $\pi_{(\xi,\rho)}(\sigma)=\oid$ 
and the associated neighbourhood $V_\sigma = \{(\eta,\rho'), \pi_{(\eta,\rho')}(\sigma)\neq 0\}$ 
does not intersect $\overline{\Xi^\one_0(E)}=\overline{\Omega_{M_{(K)}}^\one(E)}$. 
Assume $\rho^K\neq 0$ 
and let $\zeta_n \in S^*_GW_{(K)}$ be a convergent sequence to $\xi$ using Lemma \ref{lemma.comp.transverse} and write
$\sigma(\zeta_n)\vert_{E_{\xi\rho}}=\sigma(\zeta_n)\vert_{E_{\xi\rho}^K}\oplus \sigma(\zeta_n)\vert_{(E_{\xi\rho}^K)^\perp}$.
But $E_{\xi\rho}^K \subset E_{\xi}^K$ and $\sigma(\zeta_n)\vert_{E_{\xi}^K}=\pi_{(\zeta_n,\one)}(\sigma)=0$. Therefore,
$\pi_{(\xi,\rho)}(\sigma)=\lim \sigma(\zeta_n)\vert_{E_{\xi\rho}}$ is not invertible, a contradiction.\\

(ii)\
$\bullet$ \underline{\emph{Quantization of $\widetilde{q}$.}}
Cover $W=G \times_H V \to G/H$ by a finite number of local charts $Y_i \simeq D_i \times V$ centred in $y_i \in Y_i$,
 where
$D_i \subset G/H$ is a local chart. Then $E$ is trivial over $Y_i$ and we have $E\vert_{Y_i} \simeq D_i \times V \times E_0$.
Let $(\varphi_i^2)$ be a subordinated partition of unity to $D_i$ on $G/H$ (and then on $W$). 
Let $\chi_i \in \maC_c^\infty(Y_i)$ be such that $\chi_i
\varphi_i = \varphi_i$.
Then let $\psi_i \in \maC^\infty(T^*_{y_i}Y_i)$ be
such that $\psi(0) = 0$ if $|\eta| < 1/2$ and $\psi(\eta) = 1$
whenever $|\eta| \ge 1$.  Let for any classical symbol $a$ on $Y_i$, 
 and $s\in \maC^\infty_c(Y_i,E)$ 
\begin{equation*}
  Op(a) s(y) \ede \int_{T^*_{y_i}Y_i}\int_{Y_i} e^{\imath(y-z)\cdot \eta}\, a(y,
  z, \eta) s(z) dz d\eta ,
\end{equation*}
where we use the normalized measure $d\eta=(2\pi)^{-\dim W}\ d\eta_1 \cdots d\eta_{\dim W}  $.
We shall use this for $a_i(y,z, \eta) := \chi_i(y) \psi_i(\eta) \widetilde
q\Big(y,\frac{\eta}{|\eta|}\Big)\chi_i(z)$, then set 
$Q_i \ede Op(a_{i})$. 
Now define $Q:= \sum_i \varphi_i Q_i \varphi_i$ and 
$\widetilde{Q} : = \operatorname{Av}(Q)= \int_G gQg^{-1} dg.$
Then $\widetilde{Q}$ is a zero order $G$-invariant pseudodifferential
operator on $W$ representing $\widetilde q$ because the average map commutes with the principal symbol map. 
\bigskip

$\bullet$ \underline{\emph{Proof of $\widetilde{Q}\vert_{L^2(W,E)^G}=0$.}} 
By density, it is sufficient to show that 
$\widetilde{Q}s(y)=0 $ for  $y\in W_{(K)}$ and $s\in \maC^\infty_c(W,E)^G.$
By Lemma \ref{lemma.q}(iii), we have for $y\in W_{(K)}\cap Y_i$, $z\in Y_i$, and $\eta \in T^*_{y_i}Y_i$
that $\mathbb{P}a_i(y,z,\eta)=0$, and therefore
$\mathbb{P}a_i$ extends in a function $\widetilde{\mathbb{P}a_i}$ identically zero on $W$.

Thus for any $s\in \CI_c(Y_i,E)$ and $y\in Y_i \cap W_{(K)}$, we obtain that
$\mathbb{P}(y)Q_i(s)(y)=0$.
Thus we get that $\mathbb{P}Qs(y)=0$, for any $y\in W_{(K)}$ and 
this in turn gives that $\mathbb{P}Q$ extends to $0$ on $L^2(W,E)$. Using the average map, we obtain
\begin{equation}\label{eq.PQ=0}
0=\operatorname{Av}(\mathbb{P}Q)s(y)=\mathbb{P}(y)\operatorname{Av}(Q)s(y)=\mathbb{P}(y)\widetilde{Q}s(y), \quad \forall y\in W_{(K)}.
\end{equation}

It follows that $\mathbb{P}\widetilde{Q}$ extends to $0$ on $L^2(W,E)$.
Now, assume that $s\in \CI_c(W,E)^G$ then $\widetilde{Q}s \in \CI_c(W,E)^G$ 
and therefore by Equation \eqref{eq.PQ=0} we have 
\begin{equation*}
   (\widetilde{Q}s)(y) \seq \mathbb{P}(y)(\widetilde{Q}s)(y)=0.
\end{equation*}

This implies that $\widetilde Q \vert_{L^2(W,E)^G} = 0$ and this completes the proof.

\end{proof}

\begin{theorem}\label{theorem.non.principal} 
The set $\Xi^{\one}(E) := \Prim(A_M^ G/\ker(\widetilde \maR_M^{\one}
)) \subset \Prim(A_M^ G)$ associated to the ideal $\ker
(\widetilde{\maR}_M^{\one})$ of $A_M^G := \maC(S^*_G M; \End(E))^G$ is
the closure in $\Prim(A_M^ G)$ of the set $\Xi^{\one}_0(E) :=
\Xi^{\one}(E) \cap \Prim(A_{M_{(K)}}^ G)$, where $M_{(K)}$ is the principal
orbit bundle of $M$. 
\end{theorem}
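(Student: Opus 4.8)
The plan is to prove the equality $\Xi^{\one}(E) = \overline{\Xi^{\one}_0(E)}$ by establishing the two inclusions separately. The inclusion $\overline{\Xi^{\one}_0(E)} \subseteq \Xi^{\one}(E)$ is formal, while the reverse inclusion is the substantive part and consists in assembling Lemma~\ref{lemma.q} and Lemma~\ref{lem.proof.main.thm} together with an excision (localization) argument that transplants a construction from a slice neighborhood to all of $M$.

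For the first inclusion I would note that $\Xi^{\one}(E)$ is closed in $\Prim(A_M^G)$: a point $\bijchi(\eta,\rho') = \ker\pi_{(\eta,\rho')}$ belongs to $\Xi^{\one}(E) = \Prim\big(A_M^G/\ker\widetilde\maR_M^{\one}\big)$ precisely when $\ker\widetilde\maR_M^{\one} \subseteq \ker\pi_{(\eta,\rho')}$, so the complement of $\Xi^{\one}(E)$ in $\Prim(A_M^G)$ is the union, over $\sigma \in \ker\widetilde\maR_M^{\one}$, of the basic open sets $V_\sigma = \{\bijchi(\eta,\rho') \mid \pi_{(\eta,\rho')}(\sigma) \neq 0\}$. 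Since $\Xi^{\one}_0(E) = \Xi^{\one}(E) \cap \Prim(A^G_{M_{(K)}})$ is contained in $\Xi^{\one}(E)$ by definition, passing to closures gives $\overline{\Xi^{\one}_0(E)} \subseteq \Xi^{\one}(E)$.

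For the reverse inclusion I would argue by contraposition. Let $\bijchi(\xi,\rho) \in \Prim(A_M^G) \smallsetminus \overline{\Xi^{\one}_0(E)}$; it suffices to exhibit $\sigma \in \ker\widetilde\maR_M^{\one}$ with $\pi_{(\xi,\rho)}(\sigma) \neq 0$, since then $\ker\widetilde\maR_M^{\one} \not\subseteq \bijchi(\xi,\rho)$ and hence $\bijchi(\xi,\rho) \notin \Xi^{\one}(E)$. Conjugating by a suitable $g \in G$ — which moves $\bijchi(\xi,\rho)$ within its $G$-orbit and so does not affect membership in the $G$-invariant set $\overline{\Xi^{\one}_0(E)}$ — I may assume $K \subseteq G_\xi$. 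Lemma~\ref{lem.proof.main.thm}(i) then gives $\rho^K = 0$. Using the slice theorem to pass to a $G$-invariant tube around $\pi(\xi)$, I reduce to the local model $W = G\times_H V$ with $E\vert_W = G\times_H(V\times E_0)$ set up before Lemma~\ref{lemma.q}; here $W$ is an open $G$-invariant subset of $M$, the pair $(\xi,\rho)$ lies in $\Omega_W(E)$, and $\rho^K = 0$ still holds. Now Lemma~\ref{lemma.q}(i) produces the symbol $\widetilde q \in \CIc(S^*W; \End(E))^G$ with $\pi_{(\xi,\rho)}(\widetilde q) = p_\rho \neq 0$, and Lemma~\ref{lem.proof.main.thm}(ii) produces an order-zero $G$-invariant pseudodifferential operator $\widetilde Q$ on $W$ with $\sigma_0(\widetilde Q) = \widetilde q$ and $\widetilde Q\vert_{L^2(W;E)^G} = 0$.

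It remains to transport this to $M$. Since $\widetilde q = hq$ is supported in a compact subset of the tube $W$, it extends by zero to an element of $A_M^G = \maC(S^*_G M; \End(E))^G$, and $\widetilde Q$, being compactly supported inside $W$, extends by zero to an order-zero $G$-invariant pseudodifferential operator on $M$ with principal symbol $\widetilde q$. For $s \in L^2(M;E)^G$, the section $\widetilde Q s$ is supported in $W$ and there coincides with $\widetilde Q(s\vert_W)$; since $W$ is $G$-invariant, $s\vert_W \in L^2(W;E)^G$, hence $\widetilde Q s = 0$ and $\pi_{\one}(\widetilde Q) = 0$. As $\widetilde\maR_M^{\one}(\widetilde q)$ is, by construction, the class of $\pi_{\one}(\widetilde Q)$ in $\pi_{\one}(\clop^G)/\pi_{\one}(\clopn^G)$, we conclude $\widetilde q \in \ker\widetilde\maR_M^{\one}$, while $\pi_{(\xi,\rho)}(\widetilde q) = p_\rho \neq 0$; this is the element $\sigma$ we sought, so $\bijchi(\xi,\rho) \notin \Xi^{\one}(E)$, completing the argument. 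I expect the one delicate point to be this excision step: one must verify that the operator built in the model $W$ still represents $\widetilde q$ under the globally defined map $\widetilde\maR_M^{\one}$ and still annihilates all of $L^2(M;E)^G$ after being transplanted to $M$ — equivalently, that the quotient $\pi_{\one}(\clop^G)/\pi_{\one}(\clopn^G)$ and the map $\widetilde\maR_M^{\one}$ are compatible with restriction to $G$-invariant open subsets. All the genuinely analytic work has been packaged into Lemmas~\ref{lemma.q} and~\ref{lem.proof.main.thm}, so what remains is essentially bookkeeping.
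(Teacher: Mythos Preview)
Your argument is correct and follows essentially the same route as the paper's own proof: both reduce to the contrapositive, conjugate so that $K \subset G_\xi$, invoke Lemma~\ref{lem.proof.main.thm}(i) to get $\rho^K = 0$, and then use Lemma~\ref{lem.proof.main.thm}(ii) together with Lemma~\ref{lemma.q}(i) to produce a symbol $\widetilde q \in \ker\widetilde\maR_M^{\one}$ on which $\pi_{(\xi,\rho)}$ does not vanish. The only cosmetic difference is that the paper simply \emph{replaces} $M$ by the tube $W = G\times_H V$ and works there throughout, whereas you keep $M$ and spell out the excision step (extension by zero of $\widetilde q$ and $\widetilde Q$ from $W$ to $M$); your version is slightly more explicit on this point, and your brief justification of the closedness of $\Xi^{\one}(E)$ is likewise more detailed than the paper's one-line remark.
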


\begin{proof}
Since $\Xi^\one(E)$ is closed, it is enough to show that if $\ker \pi_{(\xi,\rho)}\notin
\overline{\Xi^{\one}_0(E)}$ then $\ker \pi_{(\xi,\rho)}\notin
\Xi^{\one}(E)$. Replacing $\xi$
  with some $g \xi$, $g \in G$, and $M$ with a tube around $x=\pi(\xi)$,  we may assume that $K \subset
G_\xi$ and $M=G\times_H V$ as before.
From Lemma \ref{lem.proof.main.thm} (i), we knows that $\rho^K=0$.
Furthermore Lemma \ref{lem.proof.main.thm}(ii) gives that there is $\widetilde{Q} \in \psi^0(M,E)^G$ such that
$\sigma_0(\widetilde{Q})=\widetilde q$ 	and $\widetilde{Q}\vert_{L^2(M,E)^G}=0$.
Therefore, $\sigma_0(\widetilde{Q})=\widetilde q\in \ker \widetilde \maR_M^\one$.
Since, $\pi_{(\xi,\rho)}(\widetilde q) \neq 0$ by Lemma \ref{lemma.q} $(i)$, 
$\pi_{(\xi, \rho)}$ does not vanish on $\ker( \widetilde
\maR_{M}^{\one})$, which means that $ \ker(\pi_{(\xi,
  \rho)}) \notin \Xi^{\one}(E)$ and this completes the proof.
\end{proof}

\end{document}